\newcommand{\Q}{\mathbb{Q}}
\newcommand{\Y}{\mathbb{Y}}
\newcommand{\N}{\mathbb{N}}
\DeclareMathOperator{\End}{End}
\DeclareMathOperator{\Par}{Par}
\DeclareMathOperator{\Id}{Id}
\DeclareMathOperator{\tr}{tr}
\newtheorem{theorem}{Theorem}[section]
\newtheorem{def-prop}[theorem]{Definition-Proposition}
\newtheorem{prop}[theorem]{Proposition}
\newtheorem{lemma}[theorem]{Lemma}
\newtheorem{cor}[theorem]{Corollary}
\theoremstyle{definition}
\newtheorem{ex}[theorem]{Example}
\newtheorem{defin}[theorem]{Definition}
\theoremstyle{remark}
\newtheorem*{remark}{Remark}
\begin{document}

\title{Stable characters from permutation patterns}
\author{Christian Gaetz}
\thanks{C.G. is supported by a National Science Foundation Graduate Research Fellowship under Grant No. 1122374.}
\address{Department of Mathematics, Massachusetts Institute of Technology, Cambridge, MA.}
\email{\href{mailto:gaetz@mit.edu}{gaetz@mit.edu}} 

\author{Christopher Ryba}
\email{\href{mailto:ryba@mit.edu}{ryba@mit.edu}} 
\date{\today}

\begin{abstract}
For a fixed permutation $\sigma \in S_k$, let $N_{\sigma}$ denote the function which counts occurrences of $\sigma$ as a pattern in permutations from $S_n$.  We study the expected value (and $d$-th moments) of $N_{\sigma}$ on conjugacy classes of $S_n$ and prove that the irreducible character support of these class functions stabilizes as $n$ grows.  This says that there is a single polynomial in the variables $n, m_1, \ldots, m_{dk}$ which computes these moments on any conjugacy class (of cycle type $1^{m_1}2^{m_2}\cdots$) of any symmetric group.  This result generalizes results of Hultman \cite{Hultman2014} and of Gill \cite{Gill2013}, who proved the cases $(d,k)=(1,2)$ and $(1,3)$ using ad hoc methods.  Our proof is, to our knowledge, the first application of partition algebras to the study of permutation patterns.
\end{abstract}

\maketitle

\section{Introduction} \label{sec:intro}

A permutation $\pi=\pi_1 \cdots \pi_n$ in the symmetric group $S_n$ is said to \emph{contain the pattern} $\sigma=\sigma_1 \cdots \sigma_k \in S_k$ if there exist indices $1 \leq i_1 < \cdots < i_k \leq n$ such that $\pi_{i_a}<\pi_{i_b}$ if and only if $\sigma_a < \sigma_b$;  
in this case we say $(i_1,\ldots,i_k)$ is an \emph{occurrence} of $\sigma$.  If $\pi$ does not contain any occurrences of $\sigma$, then it is said to \emph{avoid} $\sigma$.  

One of the earliest prominent results in the theory of permutation patterns was Knuth's \cite{Knuth} characterization of ``stack-sortable" permutations as those avoiding the pattern $\sigma=231$.  Since then, permutation patterns have been found to play an important role in many settings where algebraic or geometric objects are indexed by permutations, being ubiquitous in the study of Schubert varieties, Bruhat order, and Kazhdan--Lusztig polynomials \cite{Abe-Billey}.  The study of permutation patterns has grown into a very active field in its own right.

We write $N_{\sigma}(\pi)$ for the number of occurrences of $\sigma$ in $\pi$, and view
\[
N_{\sigma}: \coprod_{n \geq 1} S_n \to \N
\]
as a function on all symmetric groups at once.  These pattern counting functions are very well studied (see, e.g. \cite{Bona-textbook}).  In particular, it was shown by Janson, Nakamura, and Zeilberger \cite{Janson-Nakamura-Zeilberger} that the distribution of $N_{\sigma}$ on a uniformly random permutation from $S_n$ is asymptotically normal, and by Zeilberger that the moments of this distribution are given by polynomials in $n$ \cite{Zeilberger}.  Theorem~\ref{thm:main} refines this polynomiality result by showing that there is a polynomial which computes these moments for all conjugacy classes of all symmetric groups; Corollary~\ref{cor:recover-zeilberger} recovers Zeilberger's result.  

One area of recent interest is the, as yet poorly understood, interaction between permutation patterns and the group structure of symmetric groups, which has proven quite difficult to understand.  For example, Richard Stanley asked in 2007 for an enumeration of cyclic permutations in $S_n$ avoiding a given pattern from $S_3$; despite the fact that the total number of avoiders for each of these patterns is well-known to be given by the Catalan numbers, only partial progress has been made on Stanley's question \cite{Archer-Elizalde, Bona-Cory} with none of the cases completely resolved.  Other examples include the study of permutation patterns in involutions \cite{invol2, invol1} and in powers of a given permutation \cite{Bona-Smith}.  

Hultman \cite{Hultman2014} observed how turning permutation statistics (such as $N_{\sigma}$) into virtual $S_n$-characters allows for computation of their expected values on random permutations drawn from various distributions.  Computing these characters directly requires understanding occurrences of $\sigma$ in every conjugacy class, a problem which is considered difficult by experts.  Hultman and later Gill \cite{Gill2013} were able to compute these characters for $\sigma \in S_2$ and $\sigma \in S_3$, respectively, via some case work, but this direct enumeration is clearly infeasible for longer patterns $\sigma$ and for higher moments $N^d_{\sigma}$ of the pattern counting functions.  Theorem~\ref{thm:main} and its proof using partition algebras give a uniform framework for understanding these characters for all $\sigma$ and for all $d$.  In particular we give a conceptual explanation for the curious fact, noted by Hultman and Gill, that the irreducible character supports in the cases they computed are surprisingly small, and stabilize as $n$ grows.    

\subsection{Permutation pattern polynomials}
\label{sec:intro-ppp}

For a fixed permutation $\sigma \in S_k$, and positive integers $d,n$, we define a class function $M_{\sigma, d, n}: S_n \to \N$ by 
\begin{equation}
\label{eq:M-def}
M_{\sigma,d,n}(\pi)=\frac{1}{|C_{\pi}|} \sum_{\pi' \in C_{\pi}} N_{\sigma}^d(\pi'),
\end{equation}
where $C_{\pi}$ denotes the conjugacy class of $\pi \in S_n$.  That is, $M_{\sigma,d,n}(\pi)$ is the (raw) $d$-th moment of the random variable $N_{\sigma}$ when permutations are chosen uniformly at random from $C_{\pi}$.  In the case $d=1$ and $k=2,3$ this function has previously been studied by Hultman \cite{Hultman2014} and by Gill \cite{Gill2013}.  Conjugacy classes in the symmetric group are determined by the cycle types of permutations; for $i=1,2,\ldots$ we write $m_i$ for the function on $\coprod_{n \geq 1} S_n$ taking $\pi$ to the number of $i$-cycles in its cycle decomposition.

By definition $M_{\sigma, d, n}$ is a class function on $S_n$, and thus it may be uniquely written as a linear combination of the irreducible characters of $S_n$.  These irreducible characters $\chi^{\lambda}$ are well-known to be indexed by integer partitions $\lambda=(\lambda_1 \geq \lambda_2 \geq \cdots \geq \lambda_{\ell} \geq 0)$ with $|\lambda|:=\sum_i \lambda_i=n$.  We write $\Y$ for \emph{Young's lattice}, the set of partitions of all positive integers, and $\Y_n$ for the set of partitions of $n$.  Whenever $n-|\lambda|\geq \lambda_1$, we write $\lambda[n]$ for the partition $(n-|\lambda|, \lambda_1, \cdots, \lambda_{\ell})$ of $n$.

Our main result is Theorem~\ref{thm:main}, which shows that---although the functions $M_{\sigma, d, n}$ are not obviously related to one another as $n$ grows---in fact they ``stabilize" in a well-defined sense.  In particular, the values of $M_{\sigma,d,n}$ for all conjugacy classes in all symmetric groups $S_n$ may be determined knowing only the values in small symmetric groups.  The proof takes advantage of new techniques for understanding permutation patterns in conjugacy classes using partition algebras.

Partition algebras are well known to play a role in ``representation stability phenomena'', for example, they are endomorphism algebras in the Deligne category $\underline{\mathrm{Rep}}(S_t)$ (see \cite{ComesOstrik}). The Deligne category is a category depending on a parameter $t$ which ``interpolates'' the representation categories of symmetric groups $S_n$, and results about $\underline{\mathrm{Rep}}(S_t)$ imply results for all symmetric groups. Given that the partition algebras play a key role in the proof of our main result (Theorem \ref{thm:main}), it would be interesting to see whether there are other connections between permutation patterns and the active area of representation stability (see \cite{SamSnowden} for a survey of other aspects of representation stability).

As we will see, the family $M_{\sigma, d, n}$ behaves nicely as $n$ varies.  It will therefore sometimes be convenient to think of the family as defining a single function
\[
M_{\sigma,d}: \coprod_{n} S_n \to \N.
\]

\begin{theorem}
\label{thm:main}
Let $\sigma \in S_k$, and $d \geq 1$, then:
\begin{enumerate}[label=(\alph*)]
    \item \label{part:polynomial} $M_{\sigma, d}(\pi)$ is a polynomial in the variables $n, m_1(\pi), m_2(\pi), \ldots, m_{dk}(\pi)$ of degree at most $dk$, where $n$ has degree $1$ and $m_i$ has degree $i$. 
    \item \label{part:stable} For all $n \geq 2dk$ we have an equality of class functions
\[
M_{\sigma, d, n} = \sum_{\substack{\lambda \in \Y \\ |\lambda|\leq dk}} a^{\lambda}_{\sigma,d}(n) \chi^{\lambda[n]}
\]
for some family of polynomials $a^{\lambda}_{\sigma, d} \in \Q[n]$ of degree at most $dk-|\lambda|$.
\end{enumerate}
\end{theorem}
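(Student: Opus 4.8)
The plan is to realize the raw moment $N_\sigma^d$ as a trace on a tensor power of the permutation representation, so that passing to the conjugation-average $M_{\sigma,d,n}$ becomes projection onto a partition algebra, after which the double-centralizer theorem produces the decomposition in part~\ref{part:stable} almost for free. First I would set $V=\C^n$ with its permutation action and put $r=dk$. Writing an occurrence of $\sigma$ in $\pi$ as a pair of tuples $I=(i_1,\dots,i_k)$ and $J=(\pi(i_1),\dots,\pi(i_k))$ subject to the order constraints ``$i_1<\cdots<i_k$'' and ``$J$ has pattern $\sigma$'', one checks that
\[
N_\sigma(\pi) = \tr_{V^{\otimes k}}\big(C_\sigma\,\pi^{\otimes k}\big),
\]
where $C_\sigma\in\End(V^{\otimes k})$ is the fixed $0/1$ operator with $(C_\sigma)_{I,J}=1$ exactly when these order constraints hold. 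Taking $d$-th powers and using $\tr(A)\tr(B)=\tr(A\otimes B)$ gives $N_\sigma^d(\pi)=\tr_{V^{\otimes r}}(C_{\sigma,d}\,\pi^{\otimes r})$ with $C_{\sigma,d}=C_\sigma^{\otimes d}$, a single operator independent of $\pi$. Since averaging over a conjugacy class equals averaging $\pi\mapsto g\pi g^{-1}$ over all $g\in S_n$, cyclicity of the trace yields
\[
M_{\sigma,d,n}(\pi) = \tr_{V^{\otimes r}}\big(\overline{C}\,\pi^{\otimes r}\big), \qquad \overline{C} := \tfrac{1}{n!}\sum_{g\in S_n}(g^{\otimes r})^{-1}\,C_{\sigma,d}\,g^{\otimes r},
\]
and $\overline{C}$ is exactly the orthogonal projection of $C_{\sigma,d}$ onto the invariants $\End_{S_n}(V^{\otimes r})$. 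By Schur--Weyl duality for the partition algebra one has $\End_{S_n}(V^{\otimes r})=P_r(n)$ precisely when $n\ge 2r=2dk$, which is the source of the hypothesis in part~\ref{part:stable}.

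Next I would invoke the double-centralizer decomposition $V^{\otimes r}\cong\bigoplus_{|\lambda|\le r} M_\lambda\otimes U^{\lambda[n]}$, where $U^{\lambda[n]}$ is the irreducible $S_n$-module affording $\chi^{\lambda[n]}$ and $M_\lambda$ is the corresponding irreducible $P_r(n)$-module; the partitions occurring are exactly those with $|\lambda|\le r$. Because $\overline{C}\in P_r(n)$ and $\pi^{\otimes r}$ act on opposite tensor factors, the trace factors blockwise:
\[
M_{\sigma,d,n} = \sum_{|\lambda|\le dk} \tr_{M_\lambda}\big(\overline{C}\big)\,\chi^{\lambda[n]}.
\]
This already yields the support statement of part~\ref{part:stable}, with $a^{\lambda}_{\sigma,d}(n)=\tr_{M_\lambda}(\overline{C})$, and in particular it explains conceptually the small, stable character support observed by Hultman and Gill.

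The remaining, and in my estimation hardest, task is to show that each coefficient $a^\lambda_{\sigma,d}(n)=\tr_{M_\lambda}(\overline{C})$ is a polynomial in $n$ of degree at most $dk-|\lambda|$. The natural approach is to work in the abstract partition algebra $P_r(\delta)$ over $\Q[\delta]$, whose diagram basis is indexed by set partitions of $\{1,\dots,r,1',\dots,r'\}$ independently of the parameter and whose structure constants and simple-module matrix entries are polynomial in $\delta$; one then argues that $\overline{C}$ is the specialization at $\delta=n$ of a fixed element with polynomial coefficients, reflecting the fact that the whole construction descends to the Deligne category $\underline{\mathrm{Rep}}(S_t)$. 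The genuine subtlety is that expanding $\overline{C}$ in the orbit basis introduces denominators, since the projection divides by orbit sizes (falling factorials $(n)_b$); establishing true polynomiality requires seeing these cancel, for instance after passing to the diagram basis and recombining with the counts $\tr_{V^{\otimes r}}(\xi_P\,\pi^{\otimes r})$, each of which is a nonnegative-integer-valued polynomial in the cycle-counting functions $m_i(\pi)$. The degree bound $\deg_n a^\lambda_{\sigma,d}\le dk-|\lambda|$ should then follow by tracking $\delta$-degrees through the filtration of $P_r(\delta)$ by propagating number, using that a diagram with $p$ propagating strands annihilates $M_\lambda$ unless $|\lambda|\le p$.

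Finally, part~\ref{part:polynomial} would be deduced from part~\ref{part:stable}: each stable character $\chi^{\lambda[n]}$ is given by a fixed character polynomial of weighted degree $|\lambda|$ in $m_1,\dots,m_{|\lambda|}$ (with $m_i$ assigned degree $i$), so multiplying by $a^\lambda_{\sigma,d}(n)$ of degree $dk-|\lambda|$ and summing over $|\lambda|\le dk$ produces a polynomial in $n,m_1,\dots,m_{dk}$ of weighted degree at most $dk$, as claimed.
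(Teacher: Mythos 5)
Your framework is the same as the paper's---realize $N_\sigma^d(\pi)$ as a trace on $V^{\otimes dk}$ against a fixed $0/1$ operator, average over conjugation to land in $\End_{S_n}(V^{\otimes dk})$, and invoke Schur--Weyl duality with the partition algebra---and your route to the \emph{support} statement is actually a genuinely different and arguably cleaner one: you read it off from the bimodule decomposition $V^{\otimes r}\cong\bigoplus_{|\lambda|\le r}M_\lambda\otimes U^{\lambda[n]}$, whereas the paper never decomposes $V^{\otimes r}$ as a bimodule. Instead the paper proves part~\ref{part:polynomial} first, via a direct graph-theoretic computation showing that $\tr_{V^{\otimes r}}(g\otimes P)$ is a polynomial of weighted degree at most $r$ in $n,m_1,\dots,m_r$ (Theorem~\ref{bicharacter_theorem}), and then deduces part~\ref{part:stable} from part~\ref{part:polynomial} using the fact that character polynomials span the space of polynomials of bounded weighted degree. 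Note also that your order of deduction (part~\ref{part:polynomial} from part~\ref{part:stable}) would only establish part~\ref{part:polynomial} for $n\ge 2dk$, while the paper's argument gives it for all $n$.

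The genuine gap is the one you flag yourself: you do not prove that $a^\lambda_{\sigma,d}(n)=\tr_{M_\lambda}(\overline{C})$ is a polynomial in $n$, let alone of degree at most $dk-|\lambda|$; both of the last two steps are left at the level of ``one then argues'' and ``should then follow.'' This is not a routine verification---it is the technical heart of the paper, handled there by two separate results. First, Theorem~\ref{averaging_theorem} shows that $\overline{C}=\Phi_{dk,n}\bigl(\sum_P a_P P\bigr)$ with coefficients $a_P$ \emph{independent} of $n$: the cancellation of the orbit-size denominators you worry about is carried out explicitly by expanding in the orbit basis $x_P$, where the coefficient of $v_J$ in $\overline{C}v_I$ reduces to $\tfrac{1}{|L|!}$ times the number of permutations of the label set $L$ satisfying the two sorting conditions, which is visibly free of $n$. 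Second, the degree bound is not obtained from the propagating-number filtration or from matrix entries of simple $\Par_r(\delta)$-modules (your sketch would require Halverson-style character formulas for the partition algebra plus a separate degree analysis that you have not supplied); it comes from Theorem~\ref{bicharacter_theorem} together with the linear independence of character polynomials. As written, your argument establishes only the identification of the character support in part~\ref{part:stable}, not the polynomiality or degree claims in either part.
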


\begin{remark}
Theorem~\ref{thm:main} is far from obvious from the definitions; indeed it is not even clear a priori that $M_{\sigma, d}(\pi)$ should be determined by $n,m_1(\pi),\ldots,m_{dk}(\pi)$ alone.
\end{remark}

As a very special case of Theorem~\ref{thm:main}, we also recover a result of Zeilberger.

\begin{cor}[Zeilberger \cite{Zeilberger}]
\label{cor:recover-zeilberger}
The $d$-th moment of $N_{\sigma}$ on all of $S_n$ (rather than refined by conjugacy class, as in $M_{\sigma, d, n}$) is given by a polynomial in $n$, provided $n \geq 2dk$.
\end{cor}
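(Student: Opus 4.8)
The plan is to derive Corollary~\ref{cor:recover-zeilberger} as a direct consequence of Theorem~\ref{thm:main}\ref{part:stable} by summing over a whole symmetric group rather than a single conjugacy class. The $d$-th moment of $N_\sigma$ on all of $S_n$ is, by definition, $\frac{1}{n!}\sum_{\pi \in S_n} N_\sigma^d(\pi)$, and grouping this sum by conjugacy classes shows it equals $\frac{1}{n!}\sum_{\pi \in S_n} M_{\sigma,d,n}(\pi)$, i.e. the average of the class function $M_{\sigma,d,n}$ over all of $S_n$. The key observation is that averaging any class function $f=\sum_\mu c_\mu \chi^\mu$ over the whole group picks out precisely the coefficient of the trivial character: since $\frac{1}{n!}\sum_{\pi} \chi^\mu(\pi) = \langle \chi^\mu, \chi^{(n)}\rangle = \delta_{\mu,(n)}$, we get $\frac{1}{n!}\sum_\pi f(\pi) = c_{(n)}$.

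Next I would apply this to the stable expansion from Theorem~\ref{thm:main}\ref{part:stable}, valid for $n \geq 2dk$. In that expansion $M_{\sigma,d,n} = \sum_{|\lambda|\le dk} a^\lambda_{\sigma,d}(n)\,\chi^{\lambda[n]}$, and the trivial character $\chi^{(n)}$ is exactly $\chi^{\lambda[n]}$ for the empty partition $\lambda = \varnothing$ (since $\varnothing[n] = (n)$). Therefore the average over $S_n$ extracts the single coefficient $a^{\varnothing}_{\sigma,d}(n)$. By Theorem~\ref{thm:main}\ref{part:stable} this coefficient is a polynomial in $n$ of degree at most $dk - |\varnothing| = dk$, which is exactly the assertion that the $d$-th moment of $N_\sigma$ on $S_n$ is a polynomial in $n$ for $n \geq 2dk$.

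There is essentially no obstacle here beyond bookkeeping: the main content is entirely carried by Theorem~\ref{thm:main}, and the corollary is the specialization of the character expansion to its trivial-character component. The one point requiring care is confirming the two elementary facts used—that averaging a class function over the group isolates the trivial-isotypic coefficient, and that $\varnothing[n]=(n)$ indexes the trivial character—both of which are standard. One may alternatively deduce the statement from part~\ref{part:polynomial} by summing the polynomial expression for $M_{\sigma,d}$ weighted by the class sizes $|C_\pi|$, but the character-theoretic route is cleaner since it avoids re-deriving that the resulting weighted sum is polynomial in $n$.
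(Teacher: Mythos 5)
Your proposal is correct and follows essentially the same route as the paper: both express the $d$-th moment as the average of $M_{\sigma,d,n}$ over $S_n$, identify this with the inner product against the trivial character, and use the stable expansion of Theorem~\ref{thm:main}\ref{part:stable} to extract the coefficient $a^{\varnothing}_{\sigma,d}(n)$, which is a polynomial in $n$.
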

\begin{proof}
By definition the desired moment is
\begin{align*}
    \frac{1}{n!}\sum_{\pi \in S_n} N_{\sigma}^d(\pi)&=\frac{1}{n!}\sum_{\substack{C \\ \text{conjugacy class}}} \sum_{\pi \in C} N_{\sigma}^d(\pi) \\
    &=\frac{1}{n!}\sum_{\pi \in S_n} M_{\sigma, d, n}(\pi) \\
    &=\langle M_{\sigma, d, n} , \mathbbm{1}_{S_n} \rangle, \\
    &= \sum_{\substack{\lambda \in \Y \\ |\lambda|\leq dk}} a_{\sigma, d}^\lambda(n) \langle \chi^{\lambda[n]} , \mathbbm{1}_{S_n} \rangle,
\end{align*}
where $\langle, \rangle$ denotes the inner product of class functions. For $n \geq 2dk$, this is just $a_{\sigma, d}^\varnothing(n)$. By Theorem~\ref{thm:main} this is a polynomial.
\end{proof}

Theorem~\ref{thm:main} also allows for the computation of moments of $N_{\sigma}$ on non-uniformly random permutations, such as products of random transpositions; as observed by Hultman \cite{Hultman2014}, this reduces to computing the character inner product of $M_{\sigma, d, n}$ and another character.  Since part \ref{part:stable} of the theorem implies that the support of $M_{\sigma, d, n}$ stabilizes, such inner products become tractable finite sums, whereas the analogous sums for permutation statistics other than $N_{\sigma}$ may gain more terms as $n$ increases.
\\
 
The remainder of the paper is organized as follows: Section~\ref{sec:partition algebras} contains background material on partition algebras.  Section~\ref{sec:proof-of-main} contains the proof of Theorem~\ref{thm:main}, which connects partition algebras to permutation patterns.

\subsection{Acknowledgements}

We are grateful to Axel Hultman for his help with important references and to Pavel Etingof for his helpful comments.

\section{Partition algebras}
\label{sec:partition algebras}

Partition algebras were first introduced by Martin \cite{Martin} and, independently, Jones \cite{Jones} in connection with statistical mechanics. The partition algebras satisfy a Schur--Weyl-type duality with the symmetric group, which has made the partition algebra an object relevant to the study of representations of symmetric groups (for example, see \cite{Deligne}, \cite{BHH} and \cite{BDO}). An excellent introduction to partition algebras can be found in \cite{ComesOstrik}; every result in this section is explained there, apart from Theorem~\ref{bicharacter_theorem}.

\begin{defin}
A \emph{set partition} $\{U_i\}$ of a set $X$ is a family of subsets $U_i$ of $X$ such that $X$ is the disjoint union of the $U_i$. We say that the $U_i$ are the \emph{parts} of the set partition $\{U_i\}$. We call a set partition of the set $\{1,\ldots, k, 1^\prime, \ldots, k^\prime\}$ a \emph{$(k,k)$-set partition}.
\end{defin}

It is standard to depict set partitions diagrammatically, as this allows for a concise definition of the partition algebras. Given a set partition $\{U_i\}$ of a set $X$, we may consider a graph whose vertices are given by elements of $X$, and whose connected components are precisely the parts $U_i$. Although there are many choices of edges yielding the same connected components, they will be equivalent for our purposes. We will not distinguish between a set partition and a graph representing it, so we will refer to the elements of $U_i$ as ``vertices''.

\begin{defin}
The \emph{partition algebra} $\Par_k(t)$ (where $k \in \mathbb{Z}_{\geq 0}$ and $t \in \mathbb{C}$), as a complex vector space, has a basis consisting of all $(k,k)$-set partitions. The product $P_1 P_2$ of two partitions is computed as follows: Relabel $P_2$ by adding a prime to each element of the underlying set (so it is a set partition of $\{1^\prime, \ldots, k^\prime, 1^{\prime\prime}, \ldots, k^{\prime\prime}\}$). Consider the graph $G(P_1,P_2)$ with vertex set
\[
\{1,\ldots,k,1^\prime, \ldots, k^\prime, 1^{\prime\prime}, \ldots, k^{\prime\prime}\},
\]
which is obtained by identifying each vertex in $\{1^\prime,  \ldots, k^\prime\}$ in $P_1$ with the vertex in $P_2$ bearing the same label (and retaining the edges in both $P_1$ and $P_2$).  Let $c(P_1, P_2)$ be the number of connected components of $G(P_1, P_2)$ consisting only of vertices from $\{1^\prime, \ldots, k^\prime \}$. Then
\[
P_1 P_2 = t^{c(P_1,P_2)} P_3,
\]
where $P_3$ is obtained by removing the elements $\{1^\prime, \ldots, k^\prime\}$ from the set partition corresponding to $G(P_1,P_2)$, and then relabelling all elements with double primes to have single primes.
\end{defin}

\begin{ex}
Suppose we take the $(3,3)$-set partitions $P_1 = \{\{1,1^\prime\},\{2^\prime\},\{2,3,3^\prime\}\}$ and $P_2 = \{\{1\},\{1^\prime\},\{2\},\{2^\prime,3\},\{3^\prime\}\}$ depicted below:
\[
P_1=
\begin{tikzcd}
1\arrow[d, dash, thick]& 2\arrow[r, dash, thick] & 3\arrow[d, dash, thick]\\
1^\prime & 2^\prime & 3^\prime 
\end{tikzcd}, \hspace{0.5in}
P_2=
\begin{tikzcd} 1& 2 & 3\arrow[ld,dash, thick]\\
1^\prime & 2^\prime & 3^\prime
\end{tikzcd}.
\]
Then the graph $G(P_1, P_2)$ is
\[
\begin{tikzcd}
1\arrow[d, dash, thick]& 2\arrow[r, dash, thick] & 3\arrow[d, dash, thick]\\
1^\prime & 2^\prime & 3^\prime\arrow[ld, dash, thick] \\
1^{\prime\prime} & 2^{\prime\prime} & 3^{\prime\prime}
\end{tikzcd},
\]
from which we read off the set partition $\{\{1,1^\prime\},\{1^{\prime\prime}\},\{2,3,3^\prime,2^{\prime\prime}\},\{2^\prime\},\{3^{\prime\prime}\}\}$.  Exactly one part consists only of primed vertices, namely $\{2^\prime\}$. Thus $c(P_1,P_2) = 1$. Taking the induced set partition on the top and bottom rows, we get
\[
P_3 = 
\begin{tikzcd}
1& 2\arrow[d, dash, thick]\arrow[r, dash, thick] & 3\\
1^\prime & 2^\prime & 3^\prime
\end{tikzcd},
\]
so we conclude $P_1 P_2 = t P_3$.
\end{ex}

\begin{remark}
The partition algebras arise as endomorphism algebras in the Deligne category $\underline{\mathrm{Rep}}(S_t)$, introduced in \cite{Deligne}. These categories ``interpolate'' the representation categories of all of the symmetric groups $S_n$ in a certain sense. We direct the interested reader to \cite{ComesOstrik}.
\end{remark}

The symmetric group $S_n$ tautologically acts on the set $[n] = \{1,2,\ldots, n\}$. We let $V$ be the associated permutation representation, although to avoid confusion we write $v_i$ for the basis vector corresponding to $i \in [n]$. We consider $V^{\otimes k}$, which is simply the permutation representation on the set $[n]^k$. It therefore has a basis consisting of words of length $k$ in the alphabet $[n]$. Given such a word $I = i_1 i_2 \cdots i_k$, we write $v_I = v_{i_1} \otimes v_{i_2} \otimes \cdots \otimes v_{i_k}$.

In analogy with the Schur--Weyl duality map $\mathbb{C}S_k \to \End_{GL(V)}(V^{\otimes k})$, there is a map
\[
\Phi_{k,n}: \Par_k(n) \to \End_{S_n}(V^{\otimes k}).
\]
Fix a set partition $P$ of the set $\{1, \ldots k, 1^\prime, \ldots, k^\prime\}$, and let $I \in [n]^k$. Then
\[
\Phi_{k,n}(P) \cdot v_I = 
\sum_{J \in [n]^k} C_{P}(J,I) v_{J},
\]
where $C_P(J,I)$ is either zero or one according the following rule: Give each unprimed vertex $r$ of $P$ the label $j_r$, and give each primed vertex $r^\prime$ of $P$ the label $i_r$; if each part of $P$ is monochromatic (i.e. if all edges have endpoints with the same label), then $C_P(J,I) = 1$, if not, $C_P(J,I)=0$. The action of $S_n$ on $V^{\otimes k}$ is by permuting the label set, and therefore respects monochromatic labellings. Thus, the action of $P$ commutes with the action of $S_n$.  One can check that $\Phi_{k,n}$ is an algebra map. The key point is that the calculation of ${P_1}{P_2}$ leads to considering monochromatic labellings of $G(P_1, P_2)$, and each of the $c(P_1, P_2)$ components that are removed in the calculation may be assigned an arbitrary label in $[n]$. This is how the scalar multiple $n^{c(P_1,P_2)}$ arises (note that we have set the parameter $t$ to equal $n$). 

Let $P_1$, $P_2$ be set partitions of a set $X$. We say that $P_1$ is a \emph{coarsening} of $P_2$ if whenever $x_1, x_2 \in X$ are in the same part of $P_2$, they are also in the same part of $P_1$. This makes set partitions of $X$ into a poset.

\begin{defin}
For a $(k,k)$-set partition $P$, let $x_P \in \Par_k(t)$ be defined by the recursion
\[
P=\sum_{\mbox{$P^\prime$ is a coarsening of $P$}} x_{P^\prime}.
\]
\end{defin}
It is immediate that the $x_P$ are a basis of $\Par_k(t)$ by an upper-unitriangularity argument. Moreover, the $x_P$ are linear combinations of the ${P^\prime}$ whose coefficients do not depend on $t$, as we have made no reference to the multiplicative structure.  An easy inclusion-exclusion argument determines $\Phi_{k,n}(x_P)$.

\begin{prop}
We have that
\[
\Phi_{k,n}(x_P) \cdot v_I = \sum_{J \in [n]^k} C_{P}^\prime(J,I) v_{J},
\]
where $C_{P}^\prime(J,I)$ is computed similarly to $C_{P}(J,I)$, except that it is only nonzero if in addition to being monochromatic, distinct parts of $P$ have distinct labels.
\end{prop}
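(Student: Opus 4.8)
The plan is to reduce the statement to a single combinatorial observation about when the coefficients $C_P$ and $C_P'$ are nonzero, and then to invert the defining recursion for $x_P$ by Möbius inversion on the poset of set partitions under coarsening. For a word pair $(I,J)$, write $\ell=\ell_{I,J}$ for the labelling of the vertex set $\{1,\dots,k,1',\dots,k'\}$ that assigns label $j_r$ to each unprimed vertex $r$ and label $i_r$ to each primed vertex $r'$, and let $\pi(\ell)$ be the set partition in which two vertices lie in the same part exactly when they receive the same label. First I would record the two key characterizations. On one hand, $C_P(J,I)=1$ iff every part of $P$ is monochromatic under $\ell$, which happens precisely when each part of $P$ is contained in a single part of $\pi(\ell)$, i.e.\ when $\pi(\ell)$ is a coarsening of $P$. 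On the other hand, $C_P'(J,I)=1$ iff, in addition, distinct parts of $P$ carry distinct labels; together with monochromaticity this says that $P$ neither refines nor is properly refined by $\pi(\ell)$, which is exactly the equality $\pi(\ell)=P$.

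Next I would prove the summation identity
\[
C_P(J,I)=\sum_{\mbox{$P'$ a coarsening of $P$}} C_{P'}'(J,I).
\]
Indeed, by the second characterization the right-hand side counts coarsenings $P'$ of $P$ with $\pi(\ell)=P'$; there is at most one such $P'$, and one exists precisely when $\pi(\ell)$ is itself a coarsening of $P$, which by the first characterization is exactly the condition $C_P(J,I)=1$. Introducing the operator $B_P\in\End(V^{\otimes k})$ with matrix entries $[B_P]_{J,I}=C_P'(J,I)$, so that $B_P\cdot v_I=\sum_J C_P'(J,I)v_J$, this identity translates into the operator equation
\[
\Phi_{k,n}(P)=\sum_{\mbox{$P'$ a coarsening of $P$}} B_{P'}.
\]

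Finally I would apply the linear map $\Phi_{k,n}$ to the defining recursion $P=\sum_{P'} x_{P'}$ to obtain $\Phi_{k,n}(P)=\sum_{P'\text{ coarsening of }P}\Phi_{k,n}(x_{P'})$. Comparing this with the previous display shows that the two families $P\mapsto\Phi_{k,n}(x_P)$ and $P\mapsto B_P$ produce the same sum over coarsenings for every $P$; since this system is unitriangular with the all-in-one-block partition as its unique maximal element, Möbius inversion forces $\Phi_{k,n}(x_P)=B_P$ for all $P$, which is the claim. Equivalently, one can argue by downward induction on the coarsening order: for the coarsest partition the recursion gives $x_P=P$ and the two coefficient rules $C_P,C_P'$ visibly coincide, and the inductive step is an immediate cancellation. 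I do not anticipate a serious obstacle; the only points requiring care are fixing the direction of the refinement/coarsening order consistently and verifying that the ``distinct parts carry distinct labels'' condition is precisely the equality $\pi(\ell)=P$ rather than a one-sided inequality.
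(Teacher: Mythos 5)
Your proposal is correct and is exactly the ``easy inclusion-exclusion argument'' the paper invokes without writing out: the characterizations $C_P(J,I)=1 \iff \pi(\ell)$ coarsens $P$ and $C_P'(J,I)=1 \iff \pi(\ell)=P$ give the identity $C_P=\sum_{P'}C_{P'}'$ over coarsenings, and M\"obius inversion (or downward induction from the one-block partition) against the defining recursion for $x_P$ yields the claim. No gaps; your care about the direction of the coarsening order and the equality $\pi(\ell)=P$ is exactly the right thing to check.
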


The $x_P$ are useful because they control the kernel of $\Phi_{k,n}$:

\begin{theorem} \label{duality_hom_theorem}
The map $\Phi_{k,n}$ is always surjective. The kernel is spanned by $x_P$ where $P$ has strictly more than $n$ parts. In particular, $\Phi_{k,n}$ is injective whenever $n \geq 2k$.
\end{theorem}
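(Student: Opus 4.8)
The plan is to identify a natural basis of $\End_{S_n}(V^{\otimes k})$ indexed by set partitions and to show that $\Phi_{k,n}$ carries the basis element $x_P$ precisely onto the corresponding basis vector. First I would analyze the equivariance condition directly. Writing an endomorphism $\phi$ via its matrix entries $\phi_{J,I}$, where $\phi(v_I)=\sum_J \phi_{J,I}v_J$, the condition that $\phi$ commutes with the $S_n$-action (which sends $v_I$ to $v_{\pi(I)}$) translates into $\phi_{\pi(J),\pi(I)}=\phi_{J,I}$ for all $\pi \in S_n$. Hence $\phi$ is constant on the orbits of the diagonal $S_n$-action on $[n]^k \times [n]^k$, and $\End_{S_n}(V^{\otimes k})$ has as a basis the indicator functions of these orbits.

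The key observation is that these orbits are classified by set partitions. A pair $(J,I)$ determines a labelling of the $2k$ vertices $\{1,\ldots,k,1',\ldots,k'\}$ (vertex $r$ receives $j_r$ and vertex $r'$ receives $i_r$), and two pairs lie in the same $S_n$-orbit exactly when they exhibit the same pattern of coincidences among labels, that is, when they induce the same $(k,k)$-set partition. The orbit attached to a set partition $P$ is nonempty if and only if $P$ can be realized by labels drawn from $[n]$, which happens if and only if $P$ has at most $n$ parts, since distinct parts require distinct labels. Writing $T_P$ for the indicator of this orbit, the collection $\{T_P : P \text{ has at most } n \text{ parts}\}$ is then a basis of $\End_{S_n}(V^{\otimes k})$.

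Next I would combine this with the Proposition describing $\Phi_{k,n}(x_P)$. The coefficient $C_P'(J,I)$ is nonzero exactly when the labelling $(J,I)$ is monochromatic on each part of $P$ and assigns distinct labels to distinct parts, which is precisely the condition that $(J,I)$ induces the set partition $P$. Thus $\Phi_{k,n}(x_P)=T_P$. For $P$ with at most $n$ parts this is a basis vector of the target, yielding surjectivity; for $P$ with strictly more than $n$ parts the orbit is empty, so $T_P=0$ and $x_P$ lies in the kernel. Since the $x_P$ form a basis of $\Par_k(n)$, and $\Phi_{k,n}$ sends those $x_P$ with at most $n$ parts to distinct (hence linearly independent) basis vectors and the remaining $x_P$ to $0$, the kernel is exactly the span of those $x_P$ with more than $n$ parts. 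Finally, a $(k,k)$-set partition has at most $2k$ parts, so when $n \geq 2k$ no $P$ can have more than $n$ parts; the kernel vanishes and $\Phi_{k,n}$ is injective.

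I expect no serious obstacle: the substance lies entirely in correctly matching the combinatorial condition defining $C_P'$ with the orbit/set-partition classification, after which surjectivity, the kernel description, and the injectivity bound follow immediately. The one point requiring care is verifying that realizability of a set partition by labels in $[n]$ is governed exactly by the number of parts, which is where the bound $n \geq 2k$ ultimately enters.
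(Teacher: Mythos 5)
Your proof is correct. The paper does not actually prove this theorem---it defers to the references (every result in that section except Theorem~\ref{bicharacter_theorem} is cited from \cite{ComesOstrik})---but your argument is precisely the standard one: identify $\End_{S_n}(V^{\otimes k})$ with functions constant on the diagonal $S_n$-orbits of $[n]^k\times[n]^k$, observe that these orbits are classified by $(k,k)$-set partitions with at most $n$ parts, and check via the Proposition that $\Phi_{k,n}(x_P)$ is exactly the indicator of the orbit attached to $P$ (hence zero precisely when $P$ has more than $n$ parts, and never for $n\geq 2k$ since a $(k,k)$-set partition has at most $2k$ parts).
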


We now perform a character calculation similar to that in Theorem 3.2.2 of \cite{Halverson}.

\begin{theorem} \label{bicharacter_theorem}
Let $P$ be a $(k,k)$-set partition (and hence an element of $\Par_k(n)$). Fix an element $g \in S_n$ of cycle type $(1^{m_1}2^{m_2}\cdots)$. Then, viewing $V^{\otimes k}$ as module for $\mathbb{C}S_n \otimes \Par_k(n)$,
\[
\tr_{V^{\otimes k}}(g \otimes P)
\]
is a polynomial in $n, m_1, m_2, \ldots, m_k$. If we let $n$ have degree 1, and $m_i$ have degree $i$, then this polynomial has degree at most $k$.
\end{theorem}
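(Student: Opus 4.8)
The plan is to turn the trace into a lattice-point count and then factor that count over the connected components of an auxiliary graph built from $P$ together with the cycle structure of $g$.

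First I would unwind the definitions. Since $g$ acts on $V^{\otimes k}$ by $g \cdot v_J = v_{g(J)}$ (applying $g$ entrywise) and $\Phi_{k,n}(P) \cdot v_I = \sum_J C_P(J,I) v_J$, the diagonal entry of $g \otimes P$ at the basis vector $v_I$ is the coefficient of $v_I$ in $\sum_J C_P(J,I) v_{g(J)}$, namely $C_P(g^{-1}(I), I)$. Hence
\[
\tr_{V^{\otimes k}}(g \otimes P) = \sum_{I \in [n]^k} C_P(g^{-1}(I), I).
\]
By the rule defining $C_P$, a summand equals $1$ exactly when the labelling of the $2k$ vertices of $P$ assigning $g^{-1}(i_r)$ to the unprimed vertex $r$ and $i_r$ to the primed vertex $r'$ is monochromatic on every part of $P$. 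Writing $\ell$ for this function on vertices, the conditions become: $\ell$ is constant on each part of $P$, and $\ell(r') = g(\ell(r))$ for all $r$. The map sending $I$ to its labelling $\ell$ (with inverse $i_r = \ell(r')$) is a bijection onto such labellings, so the trace counts the functions $\ell$ from the vertices of $P$ to $[n]$ obeying these two families of constraints.

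Next I would encode the constraints as a graph $\Gamma(P)$ whose vertices are the parts of $P$ and which has, for each $r \in \{1,\dots,k\}$, a \emph{twisted} edge joining the part containing $r$ to the part containing $r'$, recording the relation ``target label $= g(\text{source label})$''. A valid $\ell$ is then the same as a labelling of the vertices of $\Gamma(P)$ compatible with the twisted edges, and such labellings are chosen independently on the connected components of $\Gamma(P)$; thus the trace factors as a product over components. Within one component $\mathcal{C}$, fixing the label $\ell_0$ of a single base vertex determines all other labels as $g$-powers of $\ell_0$ along a spanning tree, and each non-tree edge then imposes a condition $g^{a}(\ell_0) = \ell_0$, where the integer $a$ is the signed number of twisted edges traversed around the corresponding cycle. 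These conditions are jointly equivalent to $g^{d_{\mathcal{C}}}(\ell_0) = \ell_0$, where $d_{\mathcal{C}}$ is the greatest common divisor of the cycle powers $a$ (with $d_{\mathcal{C}} = 0$ meaning no condition). Crucially, $d_{\mathcal{C}}$ depends only on the combinatorics of $\Gamma(P)$ and not on $g$, since every twisted edge carries the same power $+1$.

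The contribution of $\mathcal{C}$ is therefore the number of $\ell_0 \in [n]$ fixed by $g^{d_{\mathcal{C}}}$, which is $n$ when $d_{\mathcal{C}} = 0$ and otherwise $\sum_{j \mid d_{\mathcal{C}}} j\, m_j$, since a point in a $j$-cycle of $g$ is fixed by $g^{d}$ precisely when $j \mid d$. Multiplying over components yields a polynomial in $n, m_1, m_2, \dots$ depending only on $P$ and computing the trace for every $g$, which gives polynomiality. For the degree bound I would argue componentwise: the factor $n$ has degree $1$, while $\sum_{j \mid d_{\mathcal{C}}} j\, m_j$ has degree $d_{\mathcal{C}}$, and in either case this is at most the number $e_{\mathcal{C}}$ of twisted edges in $\mathcal{C}$ (every part of $P$ meets at least one twisted edge, so a component contributing the factor $n$ has $e_{\mathcal{C}} \geq 1$; and if $d_{\mathcal{C}} > 0$ then $d_{\mathcal{C}}$ divides some nonzero cycle power $a$ with $|a| \leq e_{\mathcal{C}}$). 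As the twisted edges number $k$ in total, $\sum_{\mathcal{C}} e_{\mathcal{C}} = k$, so the total degree is at most $k$; and because $d_{\mathcal{C}} \leq k$, only $m_1, \dots, m_k$ appear. The main obstacle is exactly the bookkeeping of this last paragraph: correctly reducing each component to a single congruence $g^{d_{\mathcal{C}}}$, verifying that $d_{\mathcal{C}}$ is genuinely independent of $g$, and matching the per-component degree against its edge count so that the degrees sum to the bound $k$.
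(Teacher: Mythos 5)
Your proposal is correct and follows essentially the same route as the paper: the same reduction of the trace to counting monochromatic labellings, the same auxiliary graph on the parts of $P$ with an edge for each pair $(r,r')$, the same reduction of each component to a single condition $g^{d}(\ell_0)=\ell_0$ with $d$ the gcd of signed cycle lengths, and the same factor-per-component degree count. The only (harmless) cosmetic difference is that you keep all $k$ edges as a multigraph so that the component edge counts sum to exactly $k$, where the paper merges repeated edges and bounds the total by $k$.
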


\begin{proof}
Let us consider the action of $g \otimes P$ on $v_I$ for $I \in [n]^k$:
\[
g \otimes P \cdot v_I = \sum_{J} C_P(J,I) v_{g(J)}.
\]
The coefficient of $v_I$ in this sum is equal to $C_P(g^{-1}(I),I)$, so
\[
\tr_{V^{\otimes k}}(g \otimes P) = \sum_{I} C_P(g^{-1}(I),I).
\]
We now count the $I \in [n]^k$ such that $C_P(g^{-1}(I),I) = 1$. Because of the monochromatic condition, it suffices to instead consider labellings of the parts of $P$, rather than the individual vertices. Consider the directed graph $G$ whose vertices are parts of $P$, having a directed edge from $U_i$ to $U_j$ if there is $r \in [n]$ such that $r \in U_i$ and $r^\prime \in U_j$ (note that this graph may have loops). In order to have $C_P(g^{-1}(I),I) = 1$, it is necessary and sufficient that whenever there is an edge from $U_i$ to $U_j$, the label of $U_j$ must be obtained by applying $g$ to the label of $U_i$. Note that because the action of $g$ on labels is invertible, the label of $U_j$ also determines the label of $U_i$.

We now count the labellings.  For each connected component in $G$, choosing the label of any vertex determines the labels for all vertices in that component. This may be done algorithmically by depth-first search; however, if we encounter an edge connecting two vertices that have already been visited, we still require the labellings to be related by $g$. This happens precisely when there is a cycle in the graph. If such a cycle has $l_+$ edges directed forwards and $l_-$ edges directed backwards, we require that $g^{l_+-l_-}$ must fix the label of every vertex in the cycle, and hence the label of every vertex in the component. Let $l_{tot}$ be the greatest common divisor of $l_+ - l_-$ among all undirected cycles in the connected component in $G$, where $l_{tot}=0$ if there are no cycles. The the label of a vertex must be fixed by $g^{l_{tot}}$ and the number of labels satisfying this condition is
\[
Q_{l_{tot}} = \sum_{d \mid l_{tot}} d m_d,
\]
which is equal to $n$ if $l_{tot} = 0$. So the total number of $I \in [n]^k$ is equal to the product of $Q_{l_{tot}}$ across all connected components of the graph $G$.

It remains to show that $m_i$ with $i > k$ cannot appear, and to analyse the degree of the polynomial obtained in this way. Each element $r \in [n]$ contributes an edge to $G$ (unless an edge was already present). This means that $G$ has at most $n$ edges, so any component with a cycle must have a cycle of length at most $n$, meaning that $l_{tot} \leq n$. This shows that only the claimed variables appear. It also shows that a component of $G$ with $r$ edges contributes a multiplicative factor of degree at most $r$. Note that $r=0$ cannot happen: every part contains either a primed or unprimed vertex, and therefore has an incoming or outgoing edge in $G$ (even if it connects back to the same part).
\end{proof}

\section{Proof of the stability theorem}
\label{sec:proof-of-main}

We begin by constructing an algebraic method for counting permutation patterns. In this section, $V$ is the permutation representation of $S_n$ corresponding to the natural action on $[n]$, and $v_I$ is the pure tensor in $V^{\otimes k}$ corresponding to the word $I \in [n]^k$.  We extend the notion of pattern containment from permutations to more general functions and tuples as follows: Let $\sigma$ be any function from the set $\{1,\ldots,k\}$ to itself (not necessarily a permutation), then we say that the tuple $(i_1, \ldots, i_k)$ is \emph{$\sigma$-sorted} if $i_a < i_b$ if and only if $\sigma(a) < \sigma(b)$.

\begin{defin}
For a function $\sigma :[k] \to [k]$, let $E_\sigma: \mathbb{C} \to V^{\otimes k}$ be the linear map defined by
\[
E_\sigma(1) = \sum_{\substack{I \in [n]^k \\I \mbox{ is $\sigma$-sorted}}} v_I.
\]
Similarly, we define the transpose map $E_\sigma^T: V^{\otimes k} \to \mathbb{C}$ by
\[
E_\sigma^T(v_I) = 
\begin{cases} 
1 & \mbox{$I$ is $\sigma$-sorted} \\
0 & \mbox{otherwise} 
\end{cases}.
\]
\end{defin}

\begin{lemma}
For $g \in S_n$,  we have $N_\sigma(g) = \tr_\mathbb{C}(E_{\sigma}^T g E_{\Id})$. 
\end{lemma}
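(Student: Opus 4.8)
The plan is to unwind both sides of the claimed identity and check they count the same thing. The left-hand side $N_\sigma(g)$ counts occurrences of the pattern $\sigma$ in the permutation $g$; by definition this is the number of index tuples $(i_1,\ldots,i_k)$ with $i_1<\cdots<i_k$ such that the subword $(g(i_1),\ldots,g(i_k))$ is order-isomorphic to $\sigma$, equivalently such that $(g(i_1),\ldots,g(i_k))$ is $\sigma$-sorted in the extended sense just defined (with $\sigma$ a genuine permutation here, so $\mathrm{Id}$-sortedness of the index tuple forces $i_1<\cdots<i_k$).

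First I would compute $g\,E_{\mathrm{Id}}(1)$. Since $E_{\mathrm{Id}}(1)=\sum_{I\text{ $\mathrm{Id}$-sorted}} v_I$ ranges over all strictly increasing words $I=(i_1<\cdots<i_k)$, and $g$ acts on $V^{\otimes k}$ by $v_I\mapsto v_{g(I)}$ where $g(I)=(g(i_1),\ldots,g(i_k))$, we get
\[
g\,E_{\mathrm{Id}}(1)=\sum_{\substack{I\in[n]^k\\ I\text{ $\mathrm{Id}$-sorted}}} v_{g(I)}.
\]
Next I would apply $E_\sigma^T$, which picks out exactly those terms $v_{g(I)}$ for which the word $g(I)$ is $\sigma$-sorted, assigning each such term the value $1$. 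Composing with the scalar $E_{\mathrm{Id}}$ on the right and taking the trace over $\mathbb{C}$ (the trace of a scalar being that scalar), I obtain
\[
\tr_{\mathbb{C}}(E_\sigma^T\, g\, E_{\mathrm{Id}})=\#\{I=(i_1<\cdots<i_k): g(I)\text{ is $\sigma$-sorted}\}.
\]

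It then remains to identify this count with $N_\sigma(g)$. The condition that $(i_1,\ldots,i_k)$ is $\mathrm{Id}$-sorted is exactly $i_1<\cdots<i_k$, and the condition that $(g(i_1),\ldots,g(i_k))$ is $\sigma$-sorted says precisely that the relative order of the values $g(i_a)$ matches that prescribed by $\sigma$, i.e. $g(i_a)<g(i_b)$ iff $\sigma(a)<\sigma(b)$; this is the definition of $(i_1,\ldots,i_k)$ being an occurrence of $\sigma$ in $g$. Hence the set counted above is exactly the set of occurrences of $\sigma$ in $g$, and its cardinality is $N_\sigma(g)$. I do not anticipate a serious obstacle here; the only point requiring a moment's care is bookkeeping the two sortedness conditions simultaneously and confirming that $\mathrm{Id}$-sortedness on the index side is the same as the strict-increase requirement built into the definition of a pattern occurrence, so that no tuples are over- or under-counted.
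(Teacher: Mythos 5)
Your proposal is correct and follows essentially the same route as the paper's proof: evaluate $E_{\Id}(1)$ as the sum over strictly increasing words, apply $g$, and note that $E_\sigma^T$ selects exactly those $I$ for which $g(I)$ is $\sigma$-sorted, i.e.\ the occurrences of $\sigma$ in $g$. No gaps.
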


\begin{proof}
We compute the value of $E_{\sigma}^T g E_{\Id}$ at $1 \in \mathbb{C}$. Note that $E_{\Id}(1)$ is the sum of pure tensors $v_I$ such that the word $I$ is strictly increasing. Let us consider a single such term $v_{i_1} \otimes v_{i_2} \otimes \cdots \otimes v_{i_k}$. Applying $g$ gives $v_{g(i_1)} \otimes v_{g(i_2)} \otimes \cdots \otimes v_{g(i_k)}$. Finally, applying $E_\sigma^T$ yields one if $(g(i_1), g(i_2), \ldots, g(i_k))$ is $\sigma$-sorted, which is to say that $I$ is an occurrence of $\sigma$ in $g$, and zero otherwise.
\end{proof}

\begin{lemma} \label{shuffle_lemma}
Given $\alpha: [k_1] \to [k_1]$ and $\beta: [k_2] \to [k_2]$, we have
\[
E_\alpha \otimes E_\beta = \sum_{\gamma} E_\gamma,
\]
where the sum is over $\gamma: [k_1 + k_2] \to [k_1 + k_2]$ such that $\gamma$ restricted to $\{1, \ldots, k_1\}$ is $\alpha$-sorted, $\gamma$ restricted to $\{k_1+1, \ldots, k_1+k_2\}$ is $\beta$-sorted, and the image of $\gamma$ is of the form $\{1,\ldots, r\}$ for some $r$.
\end{lemma}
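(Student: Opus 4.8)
The plan is to prove the identity by comparing the coefficients of the basis vectors $v_W$, where $W$ ranges over words in $[n]^{k_1+k_2}$, on both sides. Under the identification $V^{\otimes k_1} \otimes V^{\otimes k_2} = V^{\otimes(k_1+k_2)}$ sending $v_I \otimes v_J$ to $v_{IJ}$ (the concatenation of the words $I$ and $J$), the definition of $E_\alpha$ and $E_\beta$ gives
\[
(E_\alpha \otimes E_\beta)(1) = \sum_{\substack{I \in [n]^{k_1},\ I \text{ is } \alpha\text{-sorted} \\ J \in [n]^{k_2},\ J \text{ is } \beta\text{-sorted}}} v_{IJ}.
\]
Thus the coefficient of a fixed $v_W$ on the left is $1$ exactly when the first $k_1$ letters of $W$ form an $\alpha$-sorted tuple and the last $k_2$ letters form a $\beta$-sorted tuple, and $0$ otherwise. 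On the right, $\sum_\gamma E_\gamma(1) = \sum_\gamma \sum_{W \text{ is } \gamma\text{-sorted}} v_W$, so the coefficient of $v_W$ equals the number of admissible $\gamma$ (those in the index set of the sum) for which $W$ is $\gamma$-sorted.

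The key step is the observation that every word $W$ has a unique \emph{standardization}: the function $\mathrm{st}(W):[k_1+k_2]\to[k_1+k_2]$ sending each index $a$ to the rank of the letter $W_a$ among the distinct letters appearing in $W$. By construction $W$ is $\mathrm{st}(W)$-sorted, and the image of $\mathrm{st}(W)$ is an initial segment $\{1,\ldots,r\}$. Conversely, if $\gamma$ has image an initial segment and $W$ is $\gamma$-sorted, then $\gamma(a)<\gamma(b)\iff W_a<W_b$ forces the fibers of $\gamma$ to coincide with the level sets of $W$ and forces $\gamma$ to respect the linear order of letter values; hence $\gamma$ is the unique order-isomorphism onto its initial-segment image, i.e. $\gamma=\mathrm{st}(W)$. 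So there is exactly one $\gamma$ with initial-segment image making $W$ $\gamma$-sorted, namely $\mathrm{st}(W)$.

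It then remains to check that $\mathrm{st}(W)$ is admissible precisely when $W$ satisfies the left-hand condition. Because $\mathrm{st}(W)(a)<\mathrm{st}(W)(b)\iff W_a<W_b$ for all indices, the restriction of $\mathrm{st}(W)$ to $\{1,\ldots,k_1\}$ is $\alpha$-sorted iff the first $k_1$ letters of $W$ form an $\alpha$-sorted tuple, and similarly for the last block and $\beta$. Hence the two sorting conditions defining the admissible $\gamma$ hold for $\mathrm{st}(W)$ exactly when $W$ meets the left-hand condition, in which case $v_W$ gets coefficient $1$ on both sides; otherwise the unique candidate $\mathrm{st}(W)$ is inadmissible and the coefficient is $0$ on both sides. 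I do not anticipate a genuine obstacle: the only delicate point is the uniqueness of the standardization among functions with initial-segment image, which is what guarantees that at most one term on the right survives for each $W$.
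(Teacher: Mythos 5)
Your proposal is correct and follows essentially the same route as the paper's proof: both compute $(E_\alpha\otimes E_\beta)(1)$ as the multiplicity-one sum over words whose two blocks are $\alpha$- and $\beta$-sorted, and both rest on the fact that each word is $\gamma$-sorted for exactly one $\gamma$ with initial-segment image (your standardization $\mathrm{st}(W)$), which the paper phrases as the image condition preventing overcounting. Your write-up just makes the uniqueness and admissibility checks more explicit.
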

\begin{proof}
By definition, $E_\alpha \otimes E_\beta$ sends $1 \in \mathbb{C}$ to the sum of all $v_I \otimes v_J$ such that $I$ is $\alpha$-sorted and $J$ is $\beta$-sorted. So, we obtain precisely the $v_{K}$ with the first $k_1$ elements $\alpha$-sorted, and final $k_2$ elements $\beta$-sorted (each with multiplicity 1). The stipulation on the image of $\gamma$ is to avoid overcounting (for example, a $(1,1)$-pattern is the same thing as a $(2,2)$-pattern, so $E_{(1,1)} = E_{(2,2)}$). Note that $E_{\gamma} = E_{\gamma^\prime}$ if and only if $\gamma$ is $\gamma^\prime$-sorted and vice-versa. If $r$ is the size of the image of $\gamma$, there is a unique $\gamma$-sorted element of $[r]$. This completes the proof.
\end{proof}

\begin{theorem}\label{averaging_theorem}
Given $\alpha, \beta$ functions $[k] \to [k]$, the linear map $T_{\alpha, \beta} : V^{\otimes k} \to V^{\otimes k}$ given by
\[
\frac{1}{n!}\sum_{x \in S_n} x^{-1} E_{\alpha} E_\beta^T x
\]
may be expressed in the form
\[
\Phi_{k,n}\left(\sum_{P} a_{P} {P}\right),
\]
where $P$ are $(k,k)$-set partitions, and the $a_P \in \mathbb{C}$ are independent of $n$.
\end{theorem}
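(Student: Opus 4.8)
The plan is to compute the matrix of $T_{\alpha,\beta}$ directly in the basis $\{v_I\}_{I \in [n]^k}$ of $V^{\otimes k}$ and to recognize the answer as $\Phi_{k,n}$ applied to an explicit, $n$-independent combination of the $x_P$. First observe that $T_{\alpha,\beta}$, being an average of $S_n$-conjugates, lies in $\End_{S_n}(V^{\otimes k})$, so by Theorem~\ref{duality_hom_theorem} it is at least of the form $\Phi_{k,n}(\sum_P a_P P)$ for some scalars; the whole content of the statement is that the $a_P$ can be taken independent of $n$. Rather than invoke surjectivity abstractly, I would exhibit the preimage by hand.

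A short computation gives the matrix coefficient
\[
\langle v_L, T_{\alpha,\beta} v_I\rangle = \frac{1}{n!}\sum_{x \in S_n} [\,x(I)\text{ is }\beta\text{-sorted}\,]\,[\,x(L)\text{ is }\alpha\text{-sorted}\,],
\]
using that $E_\beta^T x\, v_I = [\,x(I)\text{ is }\beta\text{-sorted}\,]$ and that the coefficient of $v_L$ in $x^{-1}E_\alpha(1)$ is $[\,x(L)\text{ is }\alpha\text{-sorted}\,]$. The summand depends on the pair $(L,I) \in [n]^{2k}$ only through which coordinates of the combined word $(l_1,\dots,l_k,i_1,\dots,i_k)$ are equal, i.e.\ only through its \emph{equality type}: the $(k,k)$-set partition $P$ whose parts are the maximal groups of coordinates sharing a common value. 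Indeed, if two index pairs have the same equality type they differ by a global relabelling $\rho \in S_n$, and reindexing $x \mapsto x\rho$ leaves the sum unchanged. Write $f(P)$ for this common value. Since $\langle v_L, \Phi_{k,n}(x_P)v_I\rangle = C'_P(L,I)$ is the indicator that $(L,I)$ has equality type exactly $P$ (the Proposition describing $\Phi_{k,n}(x_P)$), we obtain the operator identity $T_{\alpha,\beta} = \Phi_{k,n}\bigl(\sum_P f(P)\,x_P\bigr)$, the sum ranging over the finitely many $(k,k)$-set partitions.

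The crux is to show $f(P)$ does not depend on $n$. Fix a word of equality type $P$ using $p$ distinct values, where $p$ is the number of parts of $P$. Both sorting conditions depend on $x$ only through the relative order of the $p$ values $\{x(s)\}$ attached to the parts of $P$; and for uniformly random $x \in S_n$ this relative order is a uniformly random element of $S_p$, each of the $p!$ orders being realized by exactly $n!/p!$ permutations. Hence
\[
f(P) = \frac{1}{p!}\,\#\{w \in S_p : w \text{ makes } I \text{ } \beta\text{-sorted and } L \text{ } \alpha\text{-sorted}\},
\]
a rational number depending only on $P,\alpha,\beta$ and not on $n$. (For $P$ with more than $n$ parts no such word exists, but then $x_P \in \ker\Phi_{k,n}$ and $C'_P \equiv 0$, so the corresponding term is harmlessly zero, consistent with both formulas.)

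Finally, the recursion defining $x_P$ expresses each $x_P$ as a combination of the basis partitions $P'$ with coefficients that are independent of $n$ (indeed of $t$). Substituting and collecting terms rewrites $\sum_P f(P)\,x_P$ as $\sum_{P} a_P\,P$ with $a_P \in \mathbb{C}$ independent of $n$, as desired. I expect the only real obstacle to be the $n$-independence of $f(P)$; once the matrix coefficients are seen to depend only on the equality type and are evaluated as fixed pattern-probabilities, the remaining steps are formal manipulations with the $x_P$ basis.
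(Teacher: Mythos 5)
Your proposal is correct and follows essentially the same route as the paper: both pass to the $x_P$ basis, identify the relevant set partition as the ``equality type'' of the pair $(L,I)$ (maximal monochromatic parts), and evaluate the coefficient as a pattern-probability that is $n$-independent because only the relative order of the $p$ distinct labels matters (your $n!/p!$ count is the paper's coset-of-$S_L$ argument). The handling of partitions with more than $n$ parts and the final change of basis back to the $P$'s also match the paper.
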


\begin{proof}
Because $T_{\alpha, \beta}$ is defined by averaging over $S_n$, it commutes with the action of $S_n$ on $V^{\otimes k}$. It therefore lies in the image of $\Phi_{k,n}$, and so for each $n$, we have
\[
T_{\alpha, \beta} = \Phi_{k,n}\left(\sum_{P} a_P(n) P \right).
\]
We must show that $a_P(n)$ may be taken to be independent of $n$. It is equivalent to show that 
\[
T_{\alpha, \beta} = \Phi_{k,n}\left(\sum_{P} b_P(n) x_P \right),
\]
with $b_P(n)$ independent of $n$. This is easier to check, because of the property that for any $I, J \in [n]^k$ there is a unique $(k,k)$-set partition $P$ such that $v_J$ appears with nonzero coefficient in $\Phi_{k,n}(x_P) v_I$. This $P$ is obtained by labelling the unprimed vertices in $\{1,\ldots, k, 1^\prime, \ldots, k^\prime\}$ with $J$, and labelling the primed vertices with $I$, then taking the parts of $P$ to be the maximal monochromatic subsets of the vertices. One can easily reverse this construction to find such $I$ and $J$ given $P$, provided the number of labels available (i.e. $n$) is at least the number of parts of $P$.

Let us choose $P$ with at most $n$ parts, and then select $I$ and $J$ as such that $v_J$ appears in $\Phi_{k,n}(x_P) v_I$. Let $L$ be the set of labels appearing in at least one of $I$ and $J$. Note that $|L|$ is precisely the number of parts of $P$.

The coefficient of $v_J$ in
\[
\frac{1}{n!}\sum_{x \in S_n} x^{-1} E_{\alpha} E_\beta^T x v_I
\]
is
\[
\frac{1}{n!} |\{x \in S_n \mid x(I)\mbox{ is $\beta$-sorted and } x(J) \mbox{ is $\alpha$-sorted} \}|.
\]
The sorting condition on $x$ is determined by how it reorders the elements of $L$. If $S_L$ is the subgroup of $S_n$ permuting the subset $L$ of $[n]$, each coset of $S_L$ yields each possible relative ordering of the elements of $L$ exactly once. Hence each coset has the same number of elements satisfying the sorting condition. Thus the coefficient becomes
\begin{eqnarray*}
& &
\frac{1}{n!} |S_n/S_L||\{x \in S_L \mid x(I)\mbox{ is $\beta$-sorted and } x(J) \mbox{ is $\alpha$-sorted} \}|\\
&=&
\frac{1}{|L|!}
|\{x \in S_L \mid x(I)\mbox{ is $\beta$-sorted and } x(J) \mbox{ is $\alpha$-sorted} \}|.
\end{eqnarray*}
This coefficient is independent of $n$. We note that if $|L| > n$ (so that it would be impossible to choose suitable $I$ and $J$), $\Phi_{k,n}(x_P) = 0$, so we may take $b_P$ to have the computed value even for small $n$.
\end{proof}

We are now ready to prove the main theorem.

\begin{proof}[Proof of Theorem~\ref{thm:main}]
We first prove part \ref{part:polynomial}.  We compute:
\begin{eqnarray*}
M_{\sigma, d, n}(\pi) &=&
\frac{1}{|C_{\pi}|} \sum_{\pi' \in C_{\pi}} N_{\sigma}^d(\pi') \\
&=&
\frac{1}{n!} \sum_{x \in S_n} \tr_{\mathbb{C}}(E_\sigma^T x\pi x^{-1} E_{\Id} )^d \\
&=&
\frac{1}{n!} \sum_{x \in S_n} \tr_{V^{\otimes k}}(x^{-1} E_{\Id} E_\sigma^T x\pi  )^d \\
&=&
\frac{1}{n!} \sum_{x \in S_n} \tr_{(V^{\otimes k})^{\otimes d}}(x^{-1} (E_{\Id})^{\otimes d} (E_\sigma^T)^{\otimes d} x \pi ).
\end{eqnarray*}
Now we note that $E_{\Id}^{\otimes d}$ is a linear combination of $E_{\alpha}$ for some patterns $\alpha$ by Lemma~\ref{shuffle_lemma}, and similarly $(E_{\sigma}^{T})^{\otimes d}$ is a linear combination of $E_{\beta}^T$ for some patterns $\beta$. So our expression becomes a linear combination of
\begin{eqnarray*}
\frac{1}{n!} \sum_{x \in S_n} \tr_{(V^{\otimes dk})}(x^{-1} (E_{\alpha}) E_\beta^T xg ) &=&
\tr_{(V^{\otimes dk})}\left(\frac{1}{n!} \sum_{x \in S_n} x^{-1} (E_{\alpha}) E_\beta^T x \pi \right) \\
&=&
\tr_{(V^{\otimes dk})}\left(\Phi_{dk, n}\left(\sum_{P}{a_P} D \right)\otimes \pi \right). 
\end{eqnarray*}
Here we have used Theorem~\ref{averaging_theorem}. Finally, Theorem~\ref{bicharacter_theorem} guarantees that the result is a polynomial of the claimed form; this establishes part \ref{part:polynomial}.

For $\mu \in \Y_r$ it is well known (see Chapter 1, Section 7, Example 14 of Macdonald's book \cite{Macdonald}) that the function $\chi^{\mu[n]}$ on $\coprod_{n \geq 2r} S_n$ agrees with a polynomial $P_{\mu}(m_1,\ldots,m_r)$ of degree $r$, where $m_i$ has degree $i$ for each $i=1,\ldots,r$.  As in Section~\ref{sec:intro}, we evaluate these \emph{character polynomials} on a permutation $\pi$ by specifying the $m_i$ so that $1^{m_1}2^{m_2} \cdots$ is the cycle type of $\pi$.  Since the space of polynomials of degree at most $r$ (under this grading) has dimension $\sum_{n=0}^r |\Y_n|$, and since the character polynomials must be linearly independent by the orthogonality relation for characters, the character polynomials $\{P_{\mu}$ for $\mu \in \Y_n, n \leq r\}$ must form a basis of this space.

Now, by part \ref{part:polynomial}, we have that $M_{\sigma, d}$ is a polynomial in the variables $n,m_1,\ldots,m_{dk}$ of degree at most $dk$, where $\sigma \in S_k$ and $n$ has degree 1.  Decomposing $M_{\sigma, d}$ by applying the spanning property of character polynomials from the previous paragraph implies \ref{part:stable}.
\end{proof}

\bibliographystyle{plain}
\bibliography{paper.bib}
\end{document}